\newtheorem{theorem}{Theorem}[section]
\newtheorem{lemma}[theorem]{Lemma}
\theoremstyle{definition}
\newtheorem{definition}[theorem]{Definition}
\newtheorem{fact}[theorem]{Fact}
\newtheorem{remark}[theorem]{Remark}
\newtheorem*{claim}{Claim}
\title{There are no minimal effectively inseparable theories}
\author{Yong Cheng}
\address{School of Philosophy, Wuhan University, China}
\thanks{We thank Albert Visser for comments on the preprint version of this paper, and thank helpful comments  for improvements from the referees.}
\subjclass[2010]{03F40, 03F25, 03F30}
\keywords{Effective inseparability, Essential undecidability, Interpretation}
\begin{document}

\setcounter{tocdepth}{4}
\setcounter{page}{0}

\thispagestyle{empty}
\newpage

\begin{abstract}
This paper belongs to the research on the limit of the first incompleteness theorem. Effectively inseparable theories ({\sf EI}) can be viewed as an effective version of essentially undecidable theories ({\sf EU}), and {\sf EI} is stronger than  {\sf EU}.
We examine the question: are  there  minimal effectively inseparable theories with respect to interpretability. We propose {\sf tEI}, the theory version of {\sf EI}. We first prove that there are no minimal {\sf tEI} theories with respect to interpretability (i.e., for any {\sf tEI} theory $T$, we can effectively find a theory which is {\sf tEI} and strictly weaker than $T$ with respect to interpretability). By a theorem due to Marian B. Pour-EI,  we have {\sf tEI} is equivalent with {\sf EI}. Thus, there are no minimal {\sf EI} theories with respect to interpretability.  Also we prove that there are no minimal finitely axiomatizable {\sf EI} theories with respect to  interpretability.
\end{abstract}

\maketitle

\section{Introduction}

Since G\"{o}del, research on incompleteness has greatly deepened our understanding of the incompleteness phenomenon. The motivation of this work is to explore the limit of the first incompleteness theorem.
We can understand incompleteness in an abstract way via meta-mathematical
properties of formal theories which exhibit behaviors that can be related to incompleteness/undecidability.
For example, a general form of G\"{o}del's first incompleteness theorem says that any consistent recursively enumerable (RE) extension of Robinson Arithmetic $\mathbf{Q}$ (see Definition \ref{def of Q}) is essentially undecidable ({\sf EU}).

Meta-mathematical properties of RE   theories we discuss in this paper  include essentially incomplete, essentially undecidable, {\sf Creative}, effectively inseparable ({\sf EI}) and effectively extensible ({\sf EET}) theories. For the definitions of these theories, see Definition \ref{def of theories}.
We give an overview of the relationships between these notions. From Lemma \ref{relation about EI}, essentially incomplete theories  are equivalent with essentially undecidable theories. From Theorem \ref{Symullyan thm}, {\sf EET} theories  are equivalent with {\sf EI} theories. From Lemma \ref{relation about EI}, an {\sf EI} theory is {\sf Creative}, but a {\sf Creative} theory may not be {\sf EI}; an {\sf EU} theory may not be {\sf Creative}, and a {\sf Creative} theory may not be {\sf EU}; the notion of {\sf EI} is stronger than {\sf EU}: an {\sf EI} theory is  {\sf EU}, but an {\sf EU} theory may not be {\sf EI}.
Typical examples of {\sf EI} theories are Robinson Arithmetic $\mathbf{Q}$  and the theory $\mathbf{R}$ (see Definition \ref{def of R}).

Given a meta-mathematical
property related to incompleteness/undecidability, a natural question is: are there minimal theories with this property? For essentially undecidable theories, \cite{PV}  proves that there are no minimal essentially undecidable theories with respect to (w.r.t. for short) interpretability.

Effective inseparability is an important meta-mathematical property of RE theories: (1)
effective inseparability can be viewed as an  effective version of essential undecidability; (2) recursion-theoretic proofs of metamathematical results tend to rely on an effectively inseparable  pair of RE sets and its properties.
In this work, we examine the question: are there  minimal effectively inseparable theories?
The answer of this question depends on how we define the notion of minimality.
If we view a theory as minimal if it has a minimal number of axioms, then Robinson Arithmetic $\mathbf{Q}$ is a minimal {\sf EI} theory since $\mathbf{Q}$ is finitely axiomatized. For a theory which is not finitely axiomatizable, if we view it as minimal if it has a minimal number of axiom schemes, then the Vaught set theory $\mathbf{VS}$ (see Definition \ref{def of VS}) is a minimal {\sf EI} theory (see Fact \ref{fact on VS}) since $\mathbf{VS}$ has only one axiom scheme.
If we view an {\sf EI} theory as minimal if, after deleting one of its axioms, the remaining theory is no longer {\sf EI}, then $\mathbf{Q}$ is a minimal {\sf EI} theory since it is minimal effectively undecidable: if an axiom of $\mathbf{Q}$ is deleted, then the remaining theory is not {\sf EU} (\cite[p.62]{undecidable}).
If we view a theory as minimal if it has a minimal Turing degree, then all {\sf EI} theories are minimal since  any {\sf EI} theory  has Turing degree $\mathbf{0}^{\prime}$ by Lemma \ref{relation about EI}. Thus, it makes no sense to ask for minimal {\sf EI} theories w.r.t. Turing degree. In this paper, we examine the question whether there are  minimal {\sf EI} theories w.r.t. interpretability.

An effectively inseparable pair of RE sets is about sets of natural numbers.
In this paper, we introduce {\sf tEI} theories, the theory version of {\sf EI} theories, and prove the main theorem that  there are no minimal {\sf tEI} theories w.r.t. interpretability: for any {\sf tEI} theory $T$, we can effectively find a theory which is {\sf tEI} and strictly weaker than $T$ w.r.t. interpretability. By a theorem due to Marian B. Pour-EI,  we show that {\sf tEI} is equivalent with {\sf EI}. Thus, there are no minimal {\sf EI} theories w.r.t. interpretability.
Moreover, we show that there are no minimal finitely axiomatizable {\sf EI} theories w.r.t. interpretability. We give two proofs of this result.

If there is a minimal {\sf EI} theory, then such a theory is distinguished and could be viewed as a canonical theory of incompleteness. But we show that there are no minimal {\sf EI} theories w.r.t. interpretability. The construction of weak {\sf EI} theories  in this paper uses purely logical methods. The research on concrete incompleteness seeks for concrete true arithmetic statements from classical  mathematics which are not provable in $\mathbf{PA}$.
Both the research on meta-mathematics of arithmetic and the research on concrete incompleteness reveal that the incompleteness phenomenon is ubiquitous in both abstract formal theories and concrete mathematical theories.

This paper is structured as follows. In Section 2, we introduce basic notions and results we use in the paper. In Section 3, we prove that there are no minimal {\sf tEI} theories w.r.t. interpretability: for any {\sf tEI} theory $T$, we can effectively find a theory which is {\sf tEI} and strictly weaker than $T$ w.r.t. interpretability. By a theorem due to Marian B. Pour-EI,  we show that {\sf tEI} is equivalent with {\sf EI}. As a corollary, there are no minimal {\sf EI} theories w.r.t. interpretability.  In Section 4, we prove that there are no minimal finitely axiomatizable {\sf EI} theories w.r.t. interpretability.

\section{Preliminaries}

In this paper, we work with  first-order theories with finite signatures, and
all theories are supposed to be RE. We equate a theory with the set of theorems provable in it. We always assume the arithmetization of the base theory. Given a sentence $\phi$, let $\ulcorner\phi\urcorner$ denote the G\"{o}del number of $\phi$.
Under arithmetization, we equate a set  of sentences  with the set of G\"{o}del numbers of these sentences. Unless specifically stated,  recursive functions always mean partial recursive functions in this paper.
\smallskip

\begin{definition}[Basic notions]~\label{basic def}
\begin{enumerate}[(1)]
  \item We denote the RE set with index $i$ by $W_i$ where $W_i=\{x: \exists y \, T_1(i,x, y)\}$ and $T_1(z,x, y)$ is the Kleene predicate (see \cite{Kleene}).
  \item We  say that a pair $(A,B)$ of disjoint RE sets is  \emph{effectively inseparable} $(\sf EI)$ if there is a recursive function $f(x,y)$ such that for any $i$ and $j$, if $A\subseteq W_i$ and $B\subseteq W_j$ with $W_i\cap W_j=\emptyset$, then $f(i,j)$ converges and $f(i,j)\notin W_i\cup W_j$ (see \cite[p.94]{Rogers87}).
      \item We say that $A\subseteq\mathbb{N}$ is \emph{productive} if there exists a recursive function $f(x)$ (called a productive function for $A$) such that for every number $i$, if $W_i\subseteq A$, then $f(i)\in A- W_i$ (see \cite[p.84]{Rogers87}).
      \item We say that $A\subseteq\mathbb{N}$ is \emph{creative} if $A$ is RE and the complement of $A$ is productive (see \cite[p.84]{Rogers87}).
      \item We denote the recursive Turing degree by $\mathbf{0}$, and the jump or completion of $\mathbf{0}$ by $\mathbf{0}^{\prime}$ (see \cite[p.256]{Rogers87}).
\end{enumerate}
\end{definition}

Now we introduce the notions of essentially incomplete, essentially undecidable, {\sf Creative}, effectively inseparable and effectively extensible theories.
Essentially undecidable and essentially incomplete theories are introduced in \cite[p.14]{undecidable}.
Effectively inseparable  theories  are introduced in \cite[p.119]{Smullyan}.
Effectively extensible theories, an effective version of {\sf EU} theories, are introduced in \cite[Definition 9]{Pour-EI}.

\begin{definition}\label{def of theories}
Let  $T$ be a consistent RE theory.
\begin{enumerate}[(1)]
  \item We say $T$ is \emph{essentially incomplete} if  any consistent RE extension of $T$ over the same language is incomplete.
\item We say $T$ is \emph{essentially undecidable} ({\sf EU}) if any consistent RE extension of $T$ over the same language is undecidable.
\item Let $T_P$ be the set of G\"{o}del numbers of sentences provable in $T$ (i.e., $T_P=\{\ulcorner\phi\urcorner: T\vdash\phi\}$), and $T_R$ be the set of G\"{o}del numbers of sentences refutable in $T$ (i.e., $T_R=\{\ulcorner\phi\urcorner: T\vdash\neg\phi\}$).
      The pair $(T_P, T_R)$ is called the \emph{nuclei}  of the theory $T$.
    \item We say $T$ is \emph{{\sf Creative}} if $T_P$ is creative.
  \item We say $T$ is \emph{effectively inseparable}  ($\sf EI$) if $(T_P, T_R)$ is an $\sf EI$ pair.
  \item We say $T$ is \emph{effectively extensible}  ({\sf EET}) if there exists a recursive function $f$ such that if $i$ is the index of a consistent RE extension $S$ of $T$, then $f(i)$ outputs a sentence which is independent of $S$.\footnote{In \cite[Definition 9]{Pour-EI}, effectively extensible theories are defined based on a presentation of a theory which consists of a set of axioms and a set of reference rules of the theory. But as results in \cite{Pour-EI} show, the presentation involved is not essential for the proof of the main result in \cite{Pour-EI} that {\sf EET} is equivalent with {\sf EI}.} (see \cite[Definition 9]{Pour-EI})
\end{enumerate}
\end{definition}
\smallskip

\begin{fact}[\cite{Rogers87}, p.183, p.94]~\label{fact on creative}
\begin{enumerate}[(1)]
  \item Let $T$ be a consistent RE theory. The theory $T$ is {\sf Creative} iff any RE set is reducible to $T_P$: for any RE set $X$, there exists a recursive function $f$ such that $n\in X \Leftrightarrow f(n)\in T_P$.
  \item For any disjoint pair $(A,B)$ of RE sets, if $(A,B)$ is ${\sf EI}$, then both $A$ and $B$ are creative.
\end{enumerate}
\end{fact}
\smallskip

\begin{theorem}[\cite{Shoenfield61}, pp.172-173]~\label{Shoenfield thm}
There exists an essentially undecidable  theory  which is not {\sf Creative}.
\end{theorem}

Lemma \ref{relation about EI} and Theorem \ref{Symullyan thm} establish the relationships between essentially incomplete, {\sf EU}, {\sf Creative}, {\sf EI} and {\sf EET} theories.

\begin{lemma}\label{relation about EI}
Let $T$ be a consistent RE theory.
\begin{enumerate}[(1)]
  \item If  $T$ is ${\sf EI}$, then $T$ is ${\sf EU}$.
  \item If $T$ is ${\sf EI}$, then $T$ is {\sf Creative}.
  \item If $T$ is {\sf Creative}, then $T$ has Turing degree $\mathbf{0}^{\prime}$.
  \item $T$ is ${\sf EU}$ iff $T$ is essentially incomplete.
  \item ``$T$ is {\sf Creative}" does not imply ``$T$ is ${\sf EU}$".
  \item ``$T$ is ${\sf EU}$" does not imply ``$T$ is {\sf Creative}".
  \item ``$T$ is {\sf Creative}" does not imply ``$T$ is ${\sf EI}$".
\item  Any consistent RE extension of the theory $\mathbf{R}$ is {\sf EI}.
\end{enumerate}
\end{lemma}
\begin{proof}\label{}
\begin{enumerate}[(1)]
  \item Suppose  $T$ is ${\sf EI}$, but it  is not ${\sf EU}$. Let $S$ be a consistent RE extension of $T$ such that $S$ is decidable.  Suppose $(T_P, T_R)$ is ${\sf EI}$ via the recursive function $f$, and $S_P=W_i$ and $\overline{S_P}$, the complement of $S_P$, is $W_j$. Since $T_P\subseteq W_i$ and $T_R\subseteq W_j$, we have $f(i,j)$ converges and $f(i,j)\notin W_i\cup W_j=\mathbb{N}$, which is a contradiction.
  \item  Follows from Fact \ref{fact on creative}(2).
  \item   Follows from Fact \ref{fact on creative}(1).
\item  Follows from Theorem 2 in \cite[p.15]{undecidable}.
\item See Theorem 4.12 in \cite{Cheng23}.
\item Follows from Theorem \ref{Shoenfield thm}.
\item Follows from (5) since ${\sf EI}$ implies ${\sf EU}$.
\item Follows from the definitions and the fact that $\mathbf{R}$ is {\sf EI}.
\end{enumerate}
\end{proof}

\begin{theorem}[\cite{Pour-EI}, Theorem 1]~\label{Symullyan thm}
Let  $T$ be a consistent RE theory. Then $T$ is {\sf EET} iff $T$ is {\sf EI}.
\end{theorem}

Robinson Arithmetic $\mathbf{Q}$ and the theory $\mathbf{R}$ were introduced by Tarski, Mostowski and R.\ Robinson  in \cite[pp.51-53]{undecidable}, which are important base theories in the study of incompleteness and undecidability.\smallskip

\begin{definition}[Robinson Arithmetic $\mathbf{Q}$]~\label{def of Q}
Robinson Arithmetic $\mathbf{Q}$  is  defined in   the language $\{\mathbf{0}, \mathbf{S}, +, \times\}$ with the following axioms:
\begin{description}
  \item[$\mathbf{Q}_1$] $\forall x \forall y(\mathbf{S}x=\mathbf{S} y\rightarrow x=y)$;
  \item[$\mathbf{Q}_2$] $\forall x(\mathbf{S} x\neq \mathbf{0})$;
  \item[$\mathbf{Q}_3$] $\forall x(x\neq \mathbf{0}\rightarrow \exists y (x=\mathbf{S} y))$;
  \item[$\mathbf{Q}_4$]  $\forall x\forall y(x+ \mathbf{0}=x)$;
  \item[$\mathbf{Q}_5$] $\forall x\forall y(x+ \mathbf{S} y=\mathbf{S} (x+y))$;
  \item[$\mathbf{Q}_6$] $\forall x(x\times \mathbf{0}=\mathbf{0})$;
  \item[$\mathbf{Q}_7$] $\forall x\forall y(x\times \mathbf{S} y=x\times y +x)$.
\end{description}
\end{definition}
\begin{definition}\label{def of R}
Let $\mathbf{R}$ be the theory consisting of the following axiom schemes where $L(\mathbf{R})=\{\mathbf{0}, \mathbf{S}, +, \cdot, \leq\}$ and $x\leq y:=\exists z (z+x=y)$.
\begin{description}
  \item[\sf{Ax1}] $\overline{m}+\overline{n}=\overline{m+n}$;
  \item[\sf{Ax2}] $\overline{m}\cdot\overline{n}=\overline{m\cdot n}$;
  \item[\sf{Ax3}] $\overline{m}\neq\overline{n}$, if $m\neq n$;
  \item[\sf{Ax4}] $\forall x(x\leq \overline{n}\rightarrow x=\overline{0}\vee \cdots \vee x=\overline{n})$;
  \item[\sf{Ax5}] $\forall x(x\leq \overline{n}\vee \overline{n}\leq x)$.
\end{description}
\end{definition}

Now we introduce the notion of interpretability. \smallskip

\begin{definition}[Translations and interpretations, \cite{Visser16}, p.10-13]~
\begin{itemize}
\item We use $L(T)$ to denote the language of the theory $T$. Let $T$ be a theory in a language $L(T)$, and $S$ a theory in a language $L(S)$. In its simplest form, a \emph{translation} $I$ of language $L(T)$ into language $L(S)$ is specified by the following:
\begin{itemize}
  \item an $L(S)$-formula $\delta_I(x)$ denoting the domain of $I$;
  \item for each relation symbol $R$ of $L(T)$,  as well as the equality relation =, an $L(S)$-formula $R_I$ of the same arity;
  \item for each function symbol $F$ of $L(T)$ of arity $k$, an $L(S)$-formula $F_I$ of arity $k + 1$.
\end{itemize}
\item If $\phi$ is an $L(T)$-formula, its $I$-translation $\phi^I$ is an $L(S)$-formula constructed as follows: we rewrite the
formula in an equivalent way so that function symbols only occur in atomic subformulas of the
form $F(\overline{x}) = y$, where $\overline{x}, y$ are variables; then we replace each such atomic formula with $F_I(\overline{x}, y)$,
we replace each atomic formula of the form $R(\overline{x})$ with $R_I(\overline{x})$, and we restrict all quantifiers and
free variables to objects satisfying $\delta_I$. We take care to rename bound variables to avoid variable
capture during the process.
\item A translation $I$ of $L(T)$ into $L(S)$ is an \emph{interpretation} of $T$ in $S$ if $S$ proves the following:
\begin{itemize}
  \item for each function symbol $F$ of $L(T)$ of arity $k$, the formula expressing that $F_I$ is total on $\delta_I$:
\[\forall x_0, \cdots \forall x_{k-1} (\delta_I(x_0) \wedge \cdots \wedge \delta_I(x_{k-1}) \rightarrow \exists y (\delta_I(y) \wedge F_I(x_0, \cdots, x_{k-1}, y)));\]
  \item the $I$-translations of all theorems of $T$, and axioms of equality.
\end{itemize}
\end{itemize}
\end{definition}
The simplified picture of translations and interpretations above actually describes only \emph{one-dimensional}, \emph{parameter-free}, and \emph{one-piece}  translations. In this paper, we use this simplified notion of interpretation.\footnote{For precise
definitions of a
\emph{multi-dimensional interpretation}, an
\emph{interpretability with parameters},  and a
\emph{piece-wise interpretation}, we refer to \cite[pp.10-13]{Visser16}   for more details.}
\smallskip

\begin{definition}[Interpretations II]~
\begin{itemize}
\item A theory $T$ is \emph{interpretable} in a theory $S$ if there exists an
interpretation of $T$ in $S$.
\item Given theories $S$ and $T$, let $S\unlhd T$ denote that $S$ is interpretable in $T$ (or $T$ interprets $S$); let $S\lhd T$ denote that  $T$ interprets $S$ but $S$ does not interpret  $T$.
    \item   We say that a theory $S$ is \emph{strictly weaker} than a theory $T$ w.r.t.~ interpretability if $S\lhd T$.
\item  We say $S$ is a \emph{minimal} RE theory w.r.t.~ interpretability if there is no RE theory $T$ such  that $T\lhd  S$.
\end{itemize}
\end{definition}

The notion of interpretability provides us a method to compare different theories in different languages. If $T$ is interpretable in $S$, then all sentences provable (refutable) in $T$ are mapped, by the interpretation function, to sentences provable (refutable) in $S$.

The theory $\mathbf{VS}$ is introduced by Robert A. Vaught in \cite{Vaught67} (see also \cite{Visser12}). \smallskip

\begin{definition}[The Vaught set theory $\mathbf{VS}$]~\label{def of VS}
The theory $\mathbf{VS}$ is axiomatized by the schema
\[(V_n)\qquad  \forall x_0, \cdots, \forall x_{n-1}  \exists y \forall t (t \in y \leftrightarrow \bigvee_{i<n} t = x_i)\]
for all $n \in\omega$, asserting that $\{x_i: i < n\}$ exists.
\end{definition}
\smallskip

\begin{fact}[\cite{Visser12}, p.383]~\label{fact on VS}
The theory {\sf VS} interprets the theory $\mathbf{R}$ and hence is {\sf EI}.
\end{fact}

\begin{definition}\label{}
Given two RE theories $A$ and $B$, we define the theory $A \oplus B$ as follows. The signature of $A \oplus B$ is a disjoint sum of the signatures of $A$ and $B$ plus a new $0$-ary predicate symbol $P$. The theory $A \oplus B$ is axiomatised by all $P \rightarrow\varphi$ where $\varphi$ is an axiom of $A$, plus $\neg P\rightarrow\psi$ where $\psi$ is an axiom of $B$. We call $A \oplus B$ the interpretability infimum of $A$ and $B$.
\end{definition}

\begin{remark}\label{def of J}
In this paper, we use Janiczak's theory {\sf J} introduced in \cite[p.136]{Janiczak}, which is a theory  in the language with one binary relation symbol $E$ with the
following axioms.
\begin{description}
  \item[{\sf J1}] $E$ is an equivalence relation.
  \item[{\sf J2}] There is at most one equivalence class of size precisely $n$.
  \item[{\sf J3}] There are at least $n$ equivalence classes with at least $n$ elements.\footnote{Our presentation of the theory {\sf J} follows \cite[p.6]{PV}. We include the axiom {\sf J3} to make the proof of the following fact in Theorem \ref{J thm} more easy: over {\sf J}, every sentence is equivalent with a boolean combination of the $A_n$.}
\end{description}
We define $A_n$ to be the sentence: there exists an equivalence class of size precisely
$n + 1$. Note that the $A_n$ are mutually independent over {\sf J}.
\end{remark}

\begin{theorem}~\label{J thm}
\begin{itemize}
  \item {\sf J} is decidable (see \cite[Theorem 4]{Janiczak}).
  \item Over {\sf J}, every sentence is equivalent with a Boolean combination of the $A_n$'s
  (see \cite[Lemma 2]{Janiczak}).
\end{itemize}
\end{theorem}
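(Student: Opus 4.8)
The plan is to reduce both parts to a single back-and-forth fact about the models of ${\sf J}$: \emph{there is a recursive function $h$ such that, whenever $M$ and $N$ are models of ${\sf J}$ with $M\models A_n\iff N\models A_n$ for all $n\le h(r)$, Duplicator wins the $r$-round Ehrenfeucht--Fra\"{\i}ss\'e game on $(M,N)$; in particular $M\equiv_r N$} (one may take $h(r)=2^r$). Granting this fact, the second part is immediate: if $\phi$ has quantifier rank $r$, then for $M\models{\sf J}$ the truth value of $\phi$ in $M$ depends only on the string $\langle b_0,\dots,b_{h(r)}\rangle$ where $b_n=1$ iff $M\models A_n$; by the mutual independence of the $A_n$ over ${\sf J}$ every such string is realised in some model of ${\sf J}$, so, letting $D$ collect the strings that force $\phi$,
\[{\sf J}\vdash\ \phi\ \longleftrightarrow\ \bigvee_{\langle b_0,\dots,b_{h(r)}\rangle\in D}\Big(\ \bigwedge_{b_n=1}A_n\ \wedge\ \bigwedge_{b_n=0}\neg A_n\ \Big),\]
which is a Boolean combination of $A_0,\dots,A_{h(r)}$.

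For the first part, recall that ${\sf J}$, being recursively axiomatised, is RE. Given $\phi$ of rank $r$, search in parallel --- over the finitely many Boolean combinations $\psi$ of $A_0,\dots,A_{h(r)}$ --- for a ${\sf J}$-proof of $\phi\leftrightarrow\psi$; by the previous paragraph some such $\psi$ exists, so the search terminates with a $\psi$ for which ${\sf J}\vdash\phi\leftrightarrow\psi$. Then ${\sf J}\vdash\phi$ iff ${\sf J}\vdash\psi$, and, once more by the independence of the $A_n$ over ${\sf J}$, this holds iff $\psi$ is a propositional tautology in the letters $A_0,\dots,A_{h(r)}$: a falsifying assignment would be realised by a model of ${\sf J}$ refuting $\psi$. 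Since tautologyhood is decidable, this is a decision procedure for ${\sf J}\vdash\phi$. (Alternatively, one can evaluate $\phi$ directly in finitely many explicit ``canonical'' models --- for each sign-pattern, the equivalence relation having one class of size $n+1$ for each prescribed $n$ together with one class of every size exceeding $h(r)$; this models ${\sf J}$ by {\sf J2} and {\sf J3}, and its rank-$r$ theory is computable from the finite data defining it.)

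The substance of the proof, and the step I expect to require real care, is the back-and-forth fact. Duplicator should maintain, after round $i$ with played tuples $\bar a\subseteq M$ and $\bar b\subseteq N$, that $a_j\mapsto b_j$ is a partial $E$-isomorphism and that for every $E$-class $C$ of $M$ met by $\bar a$, with matched class $C'$ of $N$, one has $\min(|C|,2^{\,r-i})=\min(|C'|,2^{\,r-i})$. When Spoiler plays inside an already-visited class, Duplicator answers inside its match, which the invariant keeps large enough to admit a new point; when Spoiler opens a fresh class $C$ of $M$, Duplicator takes a fresh class of $N$ of exactly the same size if $|C|$ is below the current threshold --- such a class exists by the resemblance hypothesis and, by {\sf J2}, is unique and hence untouched --- and otherwise takes any fresh class of $N$ of size at least the threshold, which exists by {\sf J3}; the move is symmetric with the roles of $M$ and $N$ exchanged. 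The delicate points are the threshold bookkeeping (a class of size $s$ is touched in at most $s$ rounds, so halving the threshold at each round leaves room, which is why $h(r)=2^r$ is comfortably sufficient) and the verification that {\sf J2} and {\sf J3} really do provide every move Duplicator needs --- this being exactly the purpose of {\sf J3} flagged in the footnote, since without it the number of equivalence classes would be a further invariant and the second part of the theorem would fail.
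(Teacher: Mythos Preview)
The paper does not itself prove this theorem; it simply quotes the two assertions with citations to Janiczak's 1953 paper and uses them as black boxes thereafter. So there is no proof in the paper to compare your proposal against.

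Your Ehrenfeucht--Fra\"{\i}ss\'e approach is correct and is the standard modern route to results of this kind. The invariant you propose is the right one, and the two fresh-class cases are handled soundly: for a small class (size $s$ below the current threshold) the agreement hypothesis together with {\sf J2} supplies a unique class of size $s$ on the other side, and the threshold arithmetic guarantees that this class cannot have been consumed earlier --- every previous threshold strictly exceeded $s$, so Duplicator never selected it as a ``large'' match, and had it been opened as a ``small'' match it would already be paired with the size-$s$ class in $M$, contradicting freshness of $C$. For a large class, {\sf J3} applied with $n$ at least the current threshold plus the number of rounds already played yields an untouched large class. The passage from the back-and-forth lemma to the Boolean normal form, and from there to decidability by either of your two methods, is routine. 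Janiczak's 1953 argument predates the EF-game formalism and proceeds by a direct syntactic elimination of quantifiers; your proof is a clean repackaging in game language, and it has the merit of making the role of axiom {\sf J3} --- exactly the point the paper's footnote highlights --- completely transparent.
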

\smallskip

\begin{definition}[\cite{PV}, p.7]~\label{}
Given $X\subseteq \mathbb{N}$, we say that $W$ is a \emph{${\sf J},X$-theory} when $W$ is axiomatised over {\sf J} by boolean combinations of sentences $A_s$ for $s \in X$.
\end{definition}

Theorem \ref{key corollary} is an important tool we use in proving Theorem \ref{minimal tEI}.
\smallskip
\begin{theorem}[\cite{PV}, Theorem 4.5]~\label{key corollary}
If $U$ is a consistent essentially undecidable RE theory, then we can effectively find an infinite recursive set $X$ (from an index of $U$) such that no consistent ${\sf J}, X$-theory interprets $U$.
\end{theorem}

\section{There are no minimal {\sf tEI} theories}

In this section, we propose {\sf tEI} theories, the theory version of {\sf EI} theories, and prove that there are no minimal {\sf tEI} theories w.r.t. interpretability: for any {\sf tEI} theory $T$, we can effectively find a theory which is {\sf tEI} and strictly weaker than $T$ w.r.t. interpretability. Our proof uses Theorem 4.5 in \cite{PV}. Finally, based on a theorem due to Marian B. Pour-EI, we prove that {\sf tEI} theories are equivalent with {\sf EI} theories. As a corollary, there are no minimal {\sf EI} theories w.r.t. interpretability.

In the definition of {\sf EI} theories, $W_i$ and $W_j$ are arbitrary sets of natural numbers; if we view $W_i$ as a set of sentences, it may not even be consistent.
It is natural to consider the theory version of {\sf EI} theories in which we respectively replace $W_i$ and $W_j$ by two sets of sentences $X$ and $Y$ which respectively have the similar properties of $T_P$ and $T_R$. This is our motivation to propose {\sf tEI} theories: the theory version of {\sf EI} theories.

Note that for any RE theory $T$, $T_P$ and $T_R$ have the following properties:
\begin{enumerate}[(i)]
  \item If $\alpha\in T_P$ and $\alpha\vdash\beta$, then $\beta\in T_P$.
  \item
If $\alpha, \beta\in T_P$, then $\alpha\wedge\beta\in T_P$.
  \item
If $\alpha\in T_R$ and $\beta\vdash \alpha$, then $\beta\in T_R$.
  \item
If $\alpha, \beta\in T_R$, then $\alpha\vee\beta\in T_R$.
\end{enumerate}

\smallskip
\begin{definition}~\label{}
\begin{itemize}
  \item We say a set of sentence $X$ is a filter if the following conditions hold:
\begin{enumerate}[(1)]
  \item If $\alpha\in X$ and $\alpha\vdash\beta$, then $\beta\in X$.
  \item If $\alpha, \beta\in X$, then $\alpha\wedge\beta\in X$.
  \item $\bot\notin X$.
  \end{enumerate}
  \item
We say a set of sentence $Y$ is an ideal if the following conditions hold:
\begin{enumerate}[(A)]
  \item If $\alpha\in Y$ and $\beta\vdash \alpha$, then $\beta\in Y$.
  \item  If $\alpha, \beta\in Y$, then $\alpha\vee\beta\in Y$.
  \item $\top\notin Y$.
\end{enumerate}
\item We say a set $Y$ of sentences is co-consistent if $\{\neg\phi: \phi\in Y\}$ is consistent.
\end{itemize}
\end{definition}

Note that for any consistent RE theory $T$, $T_P$ is a filter and $T_R$ is an ideal. In the theory version of {\sf EI} theories we will define, we respectively replace the $W_i$ and $W_j$ in the definition of {\sf EI} theories with a filter set of sentences and an ideal set of sentences.

\begin{definition}\label{tEI}
We say an RE theory $T$ is {\sf tEI} if there exists a  recursive function $f$ such that if $T_P\subseteq X, T_R\subseteq Y$ and $X\cap Y=\emptyset$, where $X$ is a  filter set of sentence and is RE with index $i$, and $Y$ is  an ideal set of sentences and is RE with index $j$, then $f(i,j)$ converges and outputs a sentence neither in $X$ nor in $Y$ (i.e., $f(i,j)\notin X\cup Y$).
\end{definition}

Note that in Definition \ref{tEI}, $X$ is consistent and $Y$ is co-consistent.
We first show that {\sf tEI} theories are closed under interpretability infimum.

\begin{theorem}\label{tEI close}
If $U$ and $V$ are {\sf tEI} theories, then $T=U \oplus V$ is {\sf tEI}.
\end{theorem}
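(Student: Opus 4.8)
The plan is to build a recursive function witnessing {\sf tEI}-ness of $T = U \oplus V$ from the recursive functions $f_U$, $f_V$ witnessing {\sf tEI}-ness of $U$ and $V$ respectively, together with the recursive functions $g_U$, $g_V$ that map a $T$-sentence to the $U$-part (resp.\ $V$-part) obtained by the canonical interpretation. Recall that $U$ (and likewise $V$) is interpretable in $T$: the interpretation $\iota_U$ sends a $U$-sentence $\varphi$ to (essentially) $P \to \varphi^{\iota}$, and similarly $\iota_V$ sends $\psi$ to $\neg P \to \psi^{\iota}$. The key structural facts are: $T \vdash (P \to \varphi)$ iff $U \vdash \varphi$, and $T \vdash (\neg P \to \psi)$ iff $V \vdash \psi$; moreover $T_P$ and $T_R$ decompose along $P$ in the sense that a sentence provable in $T$ is, up to $T$-provable equivalence, a conjunction of a sentence of the form $P \to \alpha$ (with $U \vdash \alpha$) and a sentence of the form $\neg P \to \beta$ (with $V \vdash \beta$).

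First I would set up, given RE indices $i$ and $j$ for a filter $X \supseteq T_P$ and an ideal $Y \supseteq T_R$ with $X \cap Y = \emptyset$, two derived pairs: let $X_U = \{\alpha : (P \to \alpha) \in X\}$ and $Y_U = \{\alpha : (P \to \alpha) \in Y\}$, and symmetrically $X_V = \{\beta : (\neg P \to \beta) \in X\}$, $Y_V = \{\beta : (\neg P \to \beta) \in Y\}$, all translated back into the languages of $U$ and $V$ via the (fixed, hence trivial) interpretations of those signatures into $T$'s signature. I would check that $X_U$ is a filter containing $U_P$: closure under consequence and conjunction for $X_U$ follows from the corresponding closure of $X$ together with the fact that $P \to (\alpha \wedge \alpha') $ is $T$-equivalent to $(P \to \alpha) \wedge (P \to \alpha')$ and that $\alpha \vdash_U \alpha'$ implies $(P\to\alpha) \vdash_T (P \to \alpha')$; $U_P \subseteq X_U$ because $U \vdash \alpha$ gives $T \vdash (P \to \alpha)$, so $(P\to\alpha) \in T_P \subseteq X$. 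Dually $Y_U$ is an ideal containing $U_R$, and likewise for the $V$-versions. Indices for all four sets are obtained effectively from $i, j$.

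The main obstacle — and the heart of the argument — is that $X_U \cap Y_U$ and $X_V \cap Y_V$ need not both be empty, so I cannot directly feed these pairs into $f_U$ and $f_V$. The resolution is a case split. If $X_U \cap Y_U = \emptyset$, then $f_U$ applied to the indices for $(X_U, Y_U)$ produces a $U$-sentence $\gamma$ with $\gamma \notin X_U \cup Y_U$; I then output $P \to \gamma$ and argue it lies outside $X \cup Y$: if $P \to \gamma \in X$ then $\gamma \in X_U$, contradiction, and similarly for $Y$. If instead $X_U \cap Y_U \neq \emptyset$, pick (search effectively for) a sentence $\gamma \in X_U \cap Y_U$; then since $\gamma \in X_U$ we have $(P \to \gamma) \in X$, and since $\gamma \in Y_U$ we have $(P \to \gamma) \in Y$, contradicting $X \cap Y = \emptyset$. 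So in that case the hypothesis of the implication defining {\sf tEI} is simply violated (there is no such pair), and any value will do; more carefully, the function may diverge there but that is harmless since {\sf tEI} only constrains $f$ on genuine separating pairs. The same dichotomy with $V$ in place of $U$ handles the remaining possibility. To assemble a single recursive $f$, I would run the searches for "$X_U \cap Y_U \neq \emptyset$", "$X_V \cap Y_V \neq \emptyset$", and the computations of $f_U, f_V$ on the derived indices in parallel (dovetailing), and output the first answer that materializes: if a nonempty intersection shows up, the hypothesis fails and we need not worry; otherwise one of $f_U, f_V$ converges because at least one of $(X_U,Y_U)$, $(X_V,Y_V)$ is a genuine separating pair — indeed if both intersections were empty we are in the first case for both, and we simply use $f_U$. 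I expect the only genuinely delicate point to be the bookkeeping that the four derived sets really are filters/ideals and that the map $\gamma \mapsto (P \to \gamma)$ interacts correctly with membership in $X$ and $Y$; the recursion-theoretic packaging is routine once the logical decomposition along $P$ is in hand.
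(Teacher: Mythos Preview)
Your definition $Y_U = \{\alpha : (P \to \alpha) \in Y\}$ is where the argument breaks. You assert ``dually $Y_U$ is an ideal containing $U_R$'', but the containment fails. If $U \vdash \neg\alpha$, then $T \vdash P \to \neg\alpha$, i.e.\ $T \vdash \neg(P \wedge \alpha)$, so $P \wedge \alpha \in T_R \subseteq Y$; however there is no reason for $P \to \alpha$ to lie in $T_R$, since $T \vdash \neg(P \to \alpha)$ would force $T \vdash P$, which $U \oplus V$ never proves. Concretely, with $Y = T_R$ one gets $Y_U = \{\alpha : T \vdash P \wedge \neg\alpha\} = \emptyset$, while $\bot \in U_R$. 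Thus the hypothesis of {\sf tEI} for $U$ is not satisfied by the pair $(X_U, Y_U)$, and applying $f_U$ to its indices gives no guarantee whatsoever about the output.

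A symptom of the error is that your case split is vacuous: with your definitions, $\gamma \in X_U \cap Y_U$ would put $(P \to \gamma)$ into $X \cap Y$ directly, so the intersection is \emph{always} empty and your construction never uses $V$ at all---which should be a red flag. The paper breaks the false symmetry by pulling back $Y$ along conjunction rather than implication: it sets $Z_2 = \{\alpha : P \wedge \alpha \in Y\}$ and $Z_3 = \{\alpha : \neg P \wedge \alpha \in Y\}$. With this choice $U_R \subseteq Z_2$ and $V_R \subseteq Z_3$ do hold, but now disjointness of $(Z_0, Z_2)$ is no longer automatic. What one can show is that at least one of $Z_0 \cap Z_2$, $Z_1 \cap Z_3$ is empty; the dovetailing must therefore also cover the situation where both $f_U$ and $f_V$ have converged (to $\theta$, $\tau$) before it is known which pair is disjoint, and in that case the paper outputs the combined sentence $(P \wedge \theta) \vee (\neg P \wedge \tau)$ and checks it avoids $X \cup Y$ using whichever disjointness actually obtains.
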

\begin{proof}\label{}

Suppose $U$ is {\sf tEI} with the witnessing function $f_{U}$ and $V$ is {\sf tEI} with the witnessing function $f_{V}$.
We want to find a recursive function $g$  such that $T=U \oplus V$ is {\sf tEI} with the witnessing function $g$.

Suppose $T_P\subseteq X, T_R\subseteq Y$ and $X\cap Y=\emptyset$, where $X$ is a filter set  of sentences with index $i$ and $Y$ is an ideal set of sentences with index $j$. We describe how to compute $g(i,j)$ such that $g(i,j)$ converges and $g(i,j)\notin X\cup Y$.

Define $Z_0=\{\phi: P\rightarrow\phi\in X\}$, $Z_1=\{\phi: \neg P\rightarrow\phi\in X\}$, $Z_2=\{\phi: P\wedge\phi\in Y\}$ and $Z_3=\{\phi: \neg P\wedge\phi\in Y\}$. Note that $Z_0, Z_1, Z_2$ and $Z_3$ are all RE set of sentences. Suppose $Z_0$ has index $k_0$ and $Z_1$ has index $k_1$. Note that $k_0$ and $k_1$ can be computed effectively from $i$.
Suppose $Z_2$ has index $k_2$ and $Z_3$ has index $k_3$. Note that $k_2$ and $k_3$ can be computed effectively from $j$.

Note that $U_P\subseteq Z_0, V_P\subseteq Z_1, U_R\subseteq Z_2$ and $V_R\subseteq Z_3$. Since $X$ has the closure property (1)-(2) and $Y$ has the closure property (A)-(B), $Z_0$ and $Z_1$ also have the closure property (1)-(2), and $Z_2$ and $Z_3$ also have the closure property (A)-(B).

\begin{claim}
Either $Z_0\cap Z_2=\emptyset$ or $Z_1\cap Z_3=\emptyset$.
\end{claim}
\begin{proof}\label{}
Suppose $Z_0\cap Z_2\neq\emptyset$ and $Z_1\cap Z_3\neq\emptyset$. Take $\phi$ and $\psi$ such that
$P\rightarrow\phi\in X$, $P\wedge\phi\in Y$, $\neg P\rightarrow\psi\in X$ and $\neg P\wedge\psi\in Y$.

From $P\rightarrow\phi\in X$ and $\neg P\rightarrow\psi\in X$, by the property (1)-(2), we have $(P\wedge \phi)\vee (\neg P\wedge\psi)\in X$.

From $P\wedge\phi\in Y$ and $\neg P\wedge\psi\in Y$, by the property (B), we have $(P\wedge \phi)\vee (\neg P\wedge\psi)\in Y$.

Thus, $(P\wedge \phi)\vee (\neg P\wedge\psi)\in X\cap Y$ which contradicts that $X\cap Y=\emptyset$.
\end{proof}

Suppose $Z_0\cap Z_2=\emptyset$. Then $\bot \notin Z_0$ (if not, then any formula is in $Z_0$ by the property (1)), and $\top \notin Z_2$ (if not, then any formula is in $Z_2$ by the property $(A)$). Thus, $Z_0$ is a filter and $Z_2$ is an idea. Then  $f_{U}(k_0, k_2)$ is defined and $f_{U}(k_0, k_2)\notin Z_0\cup Z_2$.
Suppose $Z_1\cap Z_3=\emptyset$. Then by a similar argument, $f_{V}(k_1, k_3)$ is defined and $f_{V}(k_1, k_3)\notin Z_1\cup Z_3$.

We now check in stages simultaneously whether $f_{U}(k_0, k_2)$ converges, $f_{V}(k_1, k_3)$ converges, whether $Z_0\cap Z_2=\emptyset$ (i.e. whether we can find $\phi$ such that $P\rightarrow\phi\in X$ and $P\wedge\phi\in Y$) and whether $Z_1\cap Z_3=\emptyset$ (i.e., whether we can find $\psi$ such that $\neg P\rightarrow\psi\in X$ and $\neg P\wedge\psi\in Y$).

Suppose at stage $n$ the procedure is still running and we find at that stage:
\begin{enumerate}[(a)]
  \item If both $f_{U}(k_0, k_2)$ and $f_{V}(k_1, k_3)$ converge, say to values $\theta$ and $\tau$,
        then we define $g(i,j)$ outputs $(P\wedge \theta)\vee (\neg P\wedge \tau)$.
  \item If (a) does not apply and $Z_0\cap Z_2\neq\emptyset$. Then $Z_1\cap Z_3=\emptyset$. We output $f_{V}(k_1, k_3)$  for $g(i,j)$.
      \item  If (a) and (b) do not apply and $Z_1\cap Z_3\neq\emptyset$. Then $Z_0\cap Z_2=\emptyset$. We output $f_{U}(k_0, k_2)$ for $g(i,j)$.
\end{enumerate}
If any of $(a,b,c)$ applied at stage $n$, we stop the procedure. If neither of $(a,b,c)$ happens at stage $n$, we proceed to stage $n+1$.

Since $X\cap Y=\emptyset$, either $Z_0\cap Z_2=\emptyset$ or $Z_1\cap Z_3=\emptyset$. If both are empty, (a) will obtain at some stage.
If one is non-empty, one of the cases (b,c) will obtain at some stage. So $g(i,j)$ will converge at some stage.

\begin{claim}
If $g(i,j)$ has its value via (b) or (c), then $g(i,j)\notin X\cup Y$.
\end{claim}
\begin{proof}\label{}
Suppose $g(i,j)$ received its value via (b). Then $g(i,j)=f_{V}(k_1, k_3)=\tau$. We know $\neg P\rightarrow\tau\notin X$ and $\neg P\wedge\tau\notin Y$. If $\tau\in X$, then by property (1), $\neg P\rightarrow\tau\in X$ which leads to a contradiction. If $\tau\in Y$, then by property (A), $\neg P\wedge\tau\in Y$ which leads to a contradiction. So $g(i,j)\notin X\cup Y$.

By the similar argument, if $g(i,j)$ received its value via (c), then $g(i,j)\notin X\cup Y$.
\end{proof}

\begin{claim}
If $g(i,j)$ has its value via (a), then $g(i,j)\notin X\cup Y$.
\end{claim}
\begin{proof}\label{}
If $g(i,j)$ has its value via (a), then either $Z_0\cap Z_2=\emptyset$ or $Z_1\cap Z_3=\emptyset$.

Suppose $Z_0\cap Z_2=\emptyset$. Then $\theta \notin  Z_0\cup Z_2$, i.e. $P\rightarrow \theta\notin X$ and $P\wedge \theta\notin Y$. We show  $g(i,j)=(P\wedge \theta)\vee (\neg P\wedge \tau)\notin X\cup Y$.  Suppose $(P\wedge \theta)\vee (\neg P\wedge \tau)\in X$. By the property (1), $P\rightarrow \theta\in X$ which leads to a contradiction.
Suppose $(P\wedge \theta)\vee (\neg P\wedge \tau)\in Y$. By the property (A), $P\wedge \theta\in Y$ which leads to a contradiction.

Suppose $Z_1\cap Z_3=\emptyset$. Then $\tau \notin  Z_1\cup Z_3$, i.e. $\neg P\rightarrow\tau\notin X$ and $\neg P\wedge\tau\notin Y$.
We show  $g(i,j)=(P\wedge \theta)\vee (\neg P\wedge \tau)\notin X\cup Y$.  Suppose $(P\wedge \theta)\vee (\neg P\wedge \tau)\in X$, by the property (1), $\neg P\rightarrow\tau\in X$ which leads to a contradiction.
Suppose $(P\wedge \theta)\vee (\neg P\wedge \tau)\in Y$. By the property (A), $\neg P\wedge\tau\in Y$ which leads to a contradiction.
\end{proof}
\end{proof}

\begin{remark}
In the proof of Theorem \ref{tEI close}, the argument that $g(i,j)$ will converge at some stage is not effective. But the description of the  function $g$ is effective.
\end{remark}

\begin{lemma}\label{tEI is EU}
If $T$ is ${\sf tEI}$, then $T$ is  ${\sf EU}$.
\end{lemma}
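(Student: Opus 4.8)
The plan is to show that any {\sf tEI} theory $T$ is essentially undecidable, which amounts to showing that every consistent RE extension $S$ of $T$ in the language of $T$ is undecidable. The key idea is that the {\sf tEI} property allows us to effectively separate-proof $S_P$ from $S_R$ if $S$ were decidable, contradicting the existence of the witnessing function $f$.

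\begin{proof}
Suppose $T$ is {\sf tEI} with witnessing recursive function $f$, and let $S$ be a consistent RE extension of $T$ in the language $L(T)$. Then $T_P \subseteq S_P$ and $T_R \subseteq S_R$. Since $S$ is consistent, $S_P$ is a filter and $S_R$ is an idea, and since $S_P$ and $S_R$ are disjoint (by consistency of $S$), and both are RE, we may fix RE indices $i$ for $S_P$ and $j$ for $S_R$. By the {\sf tEI} property applied with $X = S_P$ and $Y = S_R$, $f(i,j)$ converges to a sentence $\varphi$ with $\varphi \notin S_P \cup S_R$; that is, $S \nvdash \varphi$ and $S \nvdash \neg\varphi$, so $\varphi$ is independent over $S$.

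Now suppose toward a contradiction that $S$ is decidable. Then $S_P$ is a recursive set, hence we can list $S_P$ as $W_i$ for some index $i$ that can be found effectively (a decision procedure for $S_P$ yields such an index), and similarly $S_R$ is recursive and equals $W_j$ for an effectively obtained index $j$. Plugging these indices into $f$ produces a sentence $\varphi = f(i,j)$ which, by the previous paragraph, satisfies $S \nvdash \varphi$ and $S \nvdash \neg\varphi$. But now consider the extension $S' = S + \varphi$: it is a consistent (since $\varphi$ is independent over $S$) RE extension of $T$, so the whole argument applies to $S'$ as well. One obtains a genuine diagonal contradiction by noting that the separation is \emph{effective and uniform}: from a decision procedure for $S$ we effectively produce $\varphi$ independent over $S$, but we can also iterate, and more directly, if $S$ is complete and consistent then $S_R = \{\ulcorner\psi\urcorner : \ulcorner\neg\psi\urcorner \in S_P\}$ and $S_P \cup S_R$ is all sentences, contradicting $f(i,j) \notin S_P \cup S_R$. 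Thus no consistent complete RE extension of $T$ exists, i.e.\ $T$ is essentially incomplete; and since a decidable consistent RE theory always has a consistent complete RE extension (by a standard effective Lindenbaum-style argument, or because decidability is preserved under the completion), $S$ cannot be decidable.

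The main obstacle is to make the contradiction precise: the cleanest route is to observe that a consistent decidable RE theory $S$ in the language $L(T)$ can be effectively completed to a consistent complete RE theory $S^{\ast}$ (extending $T$), whose nuclei $S^{\ast}_P$ and $S^{\ast}_R$ are recursive, cover all sentences, and are disjoint; fixing indices $i^{\ast}, j^{\ast}$ for them and applying the {\sf tEI} witness $f$ yields $f(i^{\ast},j^{\ast}) \notin S^{\ast}_P \cup S^{\ast}_R$, which is impossible since every sentence lies in $S^{\ast}_P \cup S^{\ast}_R$. Hence $S$ is undecidable, and since $S$ was an arbitrary consistent RE extension of $T$, $T$ is {\sf EU}. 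The only care needed is to verify that $S^{\ast}_P$ and $S^{\ast}_R$ are genuinely filter and idea sets (immediate from consistency and completeness), so that they are legitimate inputs to $f$.
\end{proof}
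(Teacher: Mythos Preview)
Your argument is correct, but it is considerably more convoluted than necessary, and the paper's proof is essentially contained in your first paragraph alone. The paper simply takes an arbitrary consistent RE extension $S$ of $T$, observes that $S_P$ is a filter and $S_R$ is an idea with $T_P\subseteq S_P$, $T_R\subseteq S_R$, $S_P\cap S_R=\emptyset$, applies the {\sf tEI} witness $f$ to indices of $S_P$ and $S_R$ to get a sentence independent of $S$, and concludes that $T$ is essentially incomplete, hence {\sf EU}. You do exactly this in your first paragraph and could have stopped there.

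Your second and third paragraphs take a longer detour: you assume $S$ is decidable, effectively complete it to $S^\ast$, and then apply $f$ to the nuclei of $S^\ast$ to get a sentence outside $S^\ast_P\cup S^\ast_R$, which is impossible since $S^\ast$ is complete. This works, but it amounts to re-proving, inside the argument, the standard implication ``essentially incomplete $\Rightarrow$ essentially undecidable'' that the paper simply invokes. The second paragraph is also internally muddled: you assume $S$ is decidable, then argue under ``if $S$ is complete and consistent'', mention an iteration you never use, and only reach the point at the end; the third paragraph then restates the same idea more cleanly. None of this is wrong, but it obscures the one-line core of the proof that you already had.
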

\begin{proof}\label{}
Suppose $T$ is ${\sf tEI}$ with the witnessing function $f$. To show  $T$ is  ${\sf EU}$, by Lemma \ref{relation about EI}(4), it suffices to show that $T$ is essentially incomplete. Let $S$ be a consistent RE extension  of $T$, $S_P$ has index $i$ and $S_R$ has index $j$. Note that $T_P\subseteq S_P$ and $T_R\subseteq S_R$. Clearly, $S_P$ is a filter and $S_R$ is an ideal. Then $f(i,j)$ converges and $f(i,j)\notin S_P\cup S_R$. That is, $f(i,j)$ outputs a sentence independent of $S$. Thus, $S$ is incomplete.
\end{proof}
\smallskip

\begin{theorem}[The $s$-$m$-$n$ theorem, \cite{Rogers87}, p.23]~\label{}
For any $m, n\geq 1$, there exists a recursive function $s_n^m$ of $m+1$ variables such that for all $x, y_1, \cdots, y_m, z_1, \cdots, z_n$, we have
\[\phi^n_{s_n^m(x, y_1, \cdots, y_m)}(z_1, \cdots, z_n)=\phi_{x}^{m+n}(y_1, \cdots, y_m, z_1, \cdots, z_n).\]
\end{theorem}

\begin{lemma}\label{EI lemma}
Suppose $(Y,Z)$ is an effectively inseparable pair of RE sets.
Define the theory $V={\sf J} + \{A_n: n \in Y\}+ \{\neg A_n: n \in Z\}$, where $A_n$ is the sentence defined in Remark \ref{def of J}. Then $V$ is {\sf EI}.
\end{lemma}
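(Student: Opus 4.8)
The plan is to reduce the effective inseparability of the nuclei $(V_P, V_R)$ to the effective inseparability of the given pair $(Y,Z)$. First I would record that $V$ is a consistent RE theory: it is RE because $Y$ and $Z$ are RE and ${\sf J}$ is finitely axiomatised, and it is consistent because $Y\cap Z=\emptyset$ together with the mutual independence of the $A_n$ over ${\sf J}$ makes every finite fragment of its axiom set consistent, whence $V$ is consistent by compactness. Since $V$ is consistent, $(V_P,V_R)$ is a disjoint pair of RE sets, so it makes sense to ask whether it is effectively inseparable.

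Next, let $g(x,y)$ be a recursive function witnessing that $(Y,Z)$ is {\sf EI}. The key observation is that for $n\in Y$ we have $V\vdash A_n$, hence $\ulcorner A_n\urcorner\in V_P$, and for $n\in Z$ we have $V\vdash\neg A_n$, hence $\ulcorner A_n\urcorner\in V_R$. Given indices $i$ and $j$, consider the sets
\[
U_i=\{n: \ulcorner A_n\urcorner\in W_i\},\qquad U_j=\{n: \ulcorner A_n\urcorner\in W_j\}.
\]
Since $n\mapsto\ulcorner A_n\urcorner$ is recursive and $W_i,W_j$ are RE, the sets $U_i$ and $U_j$ are RE, and one can compute indices $h_1(i)$ and $h_2(j)$ for them recursively in $i$ and $j$. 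I would then define $f(i,j)=\ulcorner A_{g(h_1(i),h_2(j))}\urcorner$, which is recursive.

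To verify that $f$ witnesses {\sf EI} for $V$, suppose $V_P\subseteq W_i$, $V_R\subseteq W_j$ and $W_i\cap W_j=\emptyset$. By the observation above, $Y\subseteq U_i$ and $Z\subseteq U_j$; moreover $U_i\cap U_j=\emptyset$, since $n\in U_i\cap U_j$ would force $\ulcorner A_n\urcorner\in W_i\cap W_j=\emptyset$. Thus the hypotheses of the {\sf EI} property of $(Y,Z)$ are met for $h_1(i),h_2(j)$, so $n^\ast:=g(h_1(i),h_2(j))\notin U_i\cup U_j$, i.e.\ $\ulcorner A_{n^\ast}\urcorner\notin W_i$ and $\ulcorner A_{n^\ast}\urcorner\notin W_j$. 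Hence $f(i,j)=\ulcorner A_{n^\ast}\urcorner\notin W_i\cup W_j$, as required.

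I do not expect a genuine obstacle; this is a straightforward many-one style reduction. The only points needing a little care are (a) checking that the auxiliary sets $U_i,U_j$ are RE with indices obtained effectively from $i,j$ (this uses the arithmetization and recursiveness of $n\mapsto\ulcorner A_n\urcorner$), and (b) noting that only the implications $n\in Y\Rightarrow\ulcorner A_n\urcorner\in V_P$ and $n\in Z\Rightarrow\ulcorner A_n\urcorner\in V_R$ are used — the converse inclusions, although also true by mutual independence, play no role.
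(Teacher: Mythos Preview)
Your proof is correct and follows essentially the same approach as the paper: pull back $W_i,W_j$ along the recursive map $n\mapsto\ulcorner A_n\urcorner$ to obtain RE separations of $(Y,Z)$, apply the witnessing function for $(Y,Z)$, and push the result forward. Your additional remarks on the consistency of $V$ and on which inclusions are actually used are not in the paper's proof but are accurate and helpful.
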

\begin{proof}\label{}

We want to find a recursive function $h(i,j)$ such that if $V_P\subseteq W_i, V_R\subseteq W_j$ and $W_i\cap W_j=\emptyset$, then $h(i,j)$ converges and $h(i,j)\notin W_i\cup W_j$.

Define the function $f: n\mapsto A_n$. Clearly, $f$ is a total recursive function. By s-m-n theorem, there is a recursive function $g$ such that $f^{-1}[W_i]=W_{g(i)}$. Suppose $(Y,Z)$ is effectively inseparable via the recursive function $t(i,j)$. Define $h(i,j)=f(t(g(i), g(j)))$. Clearly, $h$ is   recursive.

Suppose
$V_P\subseteq W_i, V_R\subseteq W_j$ and $W_i\cap W_j=\emptyset$.
Note that $Y\subseteq f^{-1}[V_P]\subseteq f^{-1}[W_i]=W_{g(i)}$, and $Z\subseteq f^{-1}[V_R]\subseteq f^{-1}[W_j]=W_{g(j)}$.
Note that $W_{g(i)}\cap W_{g(j)}=\emptyset$ since $W_i\cap W_j=\emptyset$.
Since $(Y,Z)$ is {\sf EI} via the function $t$, we have $t(g(i), g(j))$ converges and $t(g(i), g(j))\notin W_{g(i)}\cup W_{g(j)}$. Then, $h(i,j)$ converges and $h(i,j)\notin W_i\cup W_j$. Thus, $V$ is {\sf EI}.
\end{proof}

\begin{theorem}\label{minimal tEI}
There are no minimal {\sf tEI} theories w.r.t. interpretation: for any {\sf tEI} theory $U$, we can effectively find a theory which is {\sf tEI} and strictly weaker than $U$ w.r.t. interpretation.
\end{theorem}
\begin{proof}\label{}
Let $U$ be an {\sf tEI} theory. By Lemma \ref{tEI is EU}, {\sf tEI} theories are essentially undecidable. Theorem \ref{key corollary} applies to {\sf tEI} theories. Thus, we can effectively find an infinite recursive set $X$ (from an index of $U$) such that no consistent
${\sf J},X$-theory interprets $U$. Let $(Y, Z)$ be an {\sf EI} pair of RE sets which are subsets of $X$.
Define the theory $V$ as $V={\sf J} + \{A_n: n \in Y\}+ \{\neg A_n: n \in Z\}$.
By Lemma \ref{EI lemma}, $V$ is {\sf EI} and hence is {\sf tEI}. Let $T=U\oplus V$. Note that from a given {\sf tEI} theory $U$, we can effectively find such a theory $T$.
By Theorem \ref{tEI close}, $T$ is {\sf tEI}. Since no consistent
${\sf J},X$-theory interprets $U$, $U$ is not interpretable in $V$.
Thus, $T\lhd U$ since $U$ is not interpretable in $T$.
\end{proof}

\begin{remark}
Theorem \ref{minimal tEI} does not say that the proof of it is effective, and it only says that we can effectively find such a theory $T$ from $U$. In fact, the proof that $T$ has the claimed properties is not effective.
\end{remark}

\begin{remark}
The proof of Theorem \ref{minimal tEI} uses the theory {\sf J} of one equivalence relation. Some reader may think that {\sf J} is not an arithmetic theory and not natural enough. In \cite{PV}, the theory ${\sf Succ}^{\circ}$, a  theory in the language of arithmetic about zero and successor, is introduced.
We could also use the theory ${\sf Succ}^{\circ}$ instead of {\sf J} in the proof of Theorem \ref{key corollary} and Theorem \ref{minimal tEI} since they have the same key properties we need (see \cite[p.5, p.13]{PV}).
\end{remark}

Based on Pour-EI's Theorem \ref{Symullyan thm}, we  prove that {\sf tEI} is equivalent with {\sf EI}.

\begin{theorem}\label{EI is tEI}
For any consistent RE theory $T$, the following statements are equivalent:
\begin{enumerate}[(1)]
  \item $T$ is {\sf EI};
  \item $T$ is {\sf tEI};
  \item $T$ is {\sf EET}.
\end{enumerate}
\end{theorem}
\begin{proof}\label{}
Clearly, {\sf EI} implies {\sf tEI}.
We show that {\sf tEI} implies {\sf EET}. Suppose $T$ is {\sf tEI} with the witnessing recursive function $f$. There exists a recursive function $h$ such that $\{\phi: \neg\phi\in W_i\}=W_{h(i)}$. Define $g(i)=f(i, h(i))$. Clearly, $g$ is recursive. For any $i$, if $S$ is a consistent RE extension of $T$ with index $i$, then $h(i)$ is the index of $S_R$. Note that $T_P\subseteq S_P, T_R\subseteq S_R, S_P$ is a filter and $S_R$ is an ideal.  Then $f(i, h(i))$ converges and $f(i, h(i))\notin S_P\cup S_R$. Thus, $g(i)$ outputs a sentence independent of $S$. Hence, $T$ is {\sf EET} with the witnessing function $g$.
Finally, by Theorem \ref{Symullyan thm}, {\sf EET} implies {\sf EI}.
\end{proof}

As a corollary of Theorem \ref{minimal tEI} and Theorem \ref{EI is tEI}, we have:

\begin{theorem}\label{minimal EI}
There are no minimal {\sf EI} theories w.r.t. interpretability: for any {\sf EI} theory $T$, we can effectively find a theory which is {\sf EI} and strictly weaker than $T$ w.r.t. interpretability.
\end{theorem}

\section{There are no minimal finitely axiomatizable {\sf EI} theories}

In this section, we prove that there are no minimal finitely axiomatizable {\sf EI} theories w.r.t. interpretability. We give two proofs of this result in Theorem \ref{effective version for finte theory} and Theorem \ref{no minial finite EI}. Both proofs use the properties of the theory {\sf TN} (see Definition \ref{TN}) and Theorem \ref{close EI} whose proof uses Smullyan's Theorem \ref{EI thm}.
Theorem \ref{effective version for finte theory} assumes piecewise interpretations and we can show that given a finitely axiomatizable {\sf EI} theory $T$, we can effectively find a finitely axiomatizable {\sf EI} theory, which is strictly weaker than $T$ w.r.t. interpretability.
One main tool of Theorem \ref{effective version for finte theory} is   Harvey Friedman's Theorem \ref{dense lemma}. The proof of Theorem \ref{no minial finite EI} does not assume piecewise interpretations, and uses an argument in the proof of Theorem 3.1 in \cite{PV}.

We first introduce the theory {\sf TN} of numbers.\smallskip

\begin{definition}[The theory {\sf TN}, \cite{Visser16}, p.4]~\label{TN}
The theory {\sf TN} consists of the following axioms:
\begin{description}
  \item[{\sf TN1}] $\vdash x\nless \mathbf{0}$;
  \item[{\sf TN2}] $\vdash (x<y\wedge y<z)\rightarrow x<z$;
  \item[{\sf TN3}] $\vdash x<y\vee x=y\vee y<x$;
  \item[{\sf TN4}] $\vdash x=\mathbf{0}\vee \exists y (x=\mathbf{S}y)$;
  \item[{\sf TN5}] $\vdash \mathbf{S} x\nless x$;
  \item[{\sf TN6}] $\vdash x<y\rightarrow (x< \mathbf{S} x\wedge y\nless \mathbf{S} x)$;
  \item[{\sf TN7}] $\vdash  x + \mathbf{0} = x$;
  \item[{\sf TN8}] $\vdash x+\mathbf{S} y=\mathbf{S} (x+y)$;
  \item[{\sf TN9}] $\vdash x \times\mathbf{0} = \mathbf{0}$;
  \item[{\sf TN10}] $\vdash x\times \mathbf{S} y=x\times y+x$.
\end{description}
\end{definition}
For more details about the theory {\sf TN}, see \cite[p.4]{Visser16}.
A $\Delta^0_0$-formula is pure if all bounding terms are variables and all occurrences of terms are in sub-formulas of the form $\mathbf{S}x=y, x+y=z$ and $x\times y=z$. A $\Sigma^0_1$-sentence is pure if it is equivalent with the form $\exists x \phi(x)$ where $\phi$ is a pure $\Delta^0_0$-formula. We can transform any $\Sigma^0_1$-sentence into a pure $\Sigma^0_1$-sentence. In this section, we assume that all $\Sigma^0_1$-sentences are rewritten in pure form.

Let $\psi=\exists x\phi(x)$, where $\phi$ is a pure $\Delta^0_0$-formula.
Define  the finitely axiomatized theory $[\psi]$ as follows:
\[[\psi]=\mathbf{TN}+ \exists x\exists y<x \, \phi(y).\]
\smallskip

\begin{definition}[\cite{Visser16}, p.4]~\label{}
Suppose $\varphi=\exists x\, A(x)$ and $\psi=\exists x\, B(x)$ are two $\Sigma^0_1$-sentences.
We Define:
\begin{enumerate}[(1)]
  \item $\varphi\preceq\psi\triangleq \exists x (A(x)\wedge\forall y< x \,\neg B(y))$;
  \item $\varphi\prec\psi\triangleq \exists x (A(x)\wedge\forall y\leq x \, \neg B(y))$;
  \item If $\theta$ is $\varphi\preceq\psi$, then $\theta^{\perp}=\psi\prec \varphi$;
  \item If $\theta$ is $\varphi\prec\psi$, then $\theta^{\perp}=\psi\preceq \varphi$.
\end{enumerate}

\end{definition}
\smallskip

\begin{fact}[\cite{Visser16}, Theorem 1; \cite{PV}, p.5]~\label{key fact on Q}
Suppose $\varphi, \psi$ are $\Sigma^0_1$-sentences.
\begin{enumerate}[(1)]
  \item If $\psi$ does not hold, then $[\psi] \supseteq \mathbf{R}$.
  \item If $\psi$ holds and we allow piecewise interpretations, then $[\psi]\unlhd \top$.\footnote{For the notion of piecewise interpretability, we refer to \cite[p.13]{Visser16}.}
  \item If $\varphi\preceq\psi$, then $[\psi]\vdash \varphi$.
  \item Let $A=\varphi\preceq\psi$. If $\varphi$ (or $\psi$) holds, then either $A$ holds or $A^{\perp}$ holds.
\end{enumerate}
\end{fact}

Now we prove that {\sf EI} theories are closed under interpretability infimum. An important  tool we use is Smullyan's Theorem \ref{EI thm}.

\begin{definition}\label{}
Let $(A,B)$ and $(C,D)$ be  disjoint pairs of RE sets.
We say $(A,B)$ is \emph{semi-reducible} to $(C,D)$ if there is a recursive function $f(x)$ such that $f(x)\in C$ if $x\in A$,  and $f(x)\in D$ if $x\in B$.
\end{definition}

\begin{theorem}[\cite{Smullyan}, pp.70-126]~\label{EI thm}
For any RE theory $T$, $T$ is {\sf EI} iff any disjoint pair $(A,B)$ of RE sets is semi-reducible to $(T_P, T_R)$.
\end{theorem}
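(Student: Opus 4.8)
The plan is to prove the two implications separately.

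\textbf{The direction $(\Leftarrow)$: if every disjoint RE pair semi-reduces to $(T_P,T_R)$, then $T$ is {\sf EI}.} I would transfer effective inseparability along a semi-reduction from a fixed ``universal'' {\sf EI} pair. Fix once and for all a concrete disjoint pair of RE sets $(A_0,B_0)$ together with an explicit recursive witness $f_0$ for its effective inseparability; a convenient choice is $A_0=\{e:\varphi_e(e)=0\}$ and $B_0=\{e:\varphi_e(e)=1\}$. Its {\sf EI}-ness is proved by the recursion theorem with parameters: from indices $i,j$ one builds, via a fixed point, a number $e=e(i,j)$ such that $\varphi_e(e)$ outputs $1$ if $e$ is enumerated into $W_i$ before $W_j$ and $0$ if into $W_j$ before $W_i$; then $A_0\subseteq W_i$, $B_0\subseteq W_j$, $W_i\cap W_j=\emptyset$ force $e(i,j)\notin W_i\cup W_j$. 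Now, by hypothesis there is a recursive $g$ with $g(x)\in T_P$ for $x\in A_0$ and $g(x)\in T_R$ for $x\in B_0$; note that once $g$ is fixed, everything below is effective. Given $i,j$ with $T_P\subseteq W_i$, $T_R\subseteq W_j$, $W_i\cap W_j=\emptyset$, use the $s$-$m$-$n$ theorem to get recursive $p,q$ with $W_{p(i)}=g^{-1}[W_i]$ and $W_{q(j)}=g^{-1}[W_j]$. Then $A_0\subseteq W_{p(i)}$, $B_0\subseteq W_{q(j)}$, and $W_{p(i)}\cap W_{q(j)}=g^{-1}[W_i\cap W_j]=\emptyset$, so $f_0(p(i),q(j))\notin W_{p(i)}\cup W_{q(j)}$, hence $g(f_0(p(i),q(j)))\notin W_i\cup W_j$. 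Thus $(i,j)\mapsto g(f_0(p(i),q(j)))$ witnesses that $T$ is {\sf EI}.

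\textbf{The direction $(\Rightarrow)$: if $T$ is {\sf EI}, then every disjoint RE pair $(A,B)$ semi-reduces to $(T_P,T_R)$.} I would adapt, to pairs, the classical argument that a productive set lies $\leq_m$-above every RE set. Let $f$ witness that $T$ is {\sf EI}, and let a disjoint RE pair $(A,B)$ be given. Using the recursion theorem (with parameters; equivalently the double recursion theorem) I would obtain recursive functions $h,h'$ such that, for every $x$,
\[
W_{h(x)}=\begin{cases}T_P\cup\{f(h(x),h'(x))\}&\text{if }x\in B\\ T_P&\text{otherwise,}\end{cases}\qquad
W_{h'(x)}=\begin{cases}T_R\cup\{f(h(x),h'(x))\}&\text{if }x\in A\\ T_R&\text{otherwise.}\end{cases}
\]
Each right-hand side is RE uniformly in $x$ (enumerate $T_P$, resp.\ $T_R$; dovetail a search for $x$ in $B$, resp.\ in $A$; when found, also enumerate the value $f(h(x),h'(x))$, which is a total computation), so such $h,h'$ exist. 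Put $\rho(x)=f(h(x),h'(x))$, a total recursive function. If $x\in A$, then $W_{h(x)}=T_P$ and $W_{h'(x)}=T_R\cup\{\rho(x)\}$, which extend $T_P$ and $T_R$; were these two sets disjoint, {\sf EI} would give $\rho(x)\notin W_{h(x)}\cup W_{h'(x)}$, contradicting $\rho(x)\in W_{h'(x)}$. So $W_{h(x)}\cap W_{h'(x)}\neq\emptyset$, and since $T_P\cap T_R=\emptyset$ (as $T$ is consistent), this forces $\rho(x)\in T_P$. Symmetrically, if $x\in B$ then $W_{h(x)}=T_P\cup\{\rho(x)\}$, $W_{h'(x)}=T_R$, and the same reasoning forces $\rho(x)\in T_R$. (For $x\notin A\cup B$ both sets are just $T_P$, $T_R$, so $\rho(x)\notin T_P\cup T_R$, which is harmless.) Hence $\rho$ semi-reduces $(A,B)$ to $(T_P,T_R)$.

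\textbf{Main obstacle.} The routine ingredients — the $s$-$m$-$n$ theorem, effectivity of the pullbacks in $(\Leftarrow)$, the fixed-point construction showing $(A_0,B_0)$ is {\sf EI} — I would dispatch briefly. The one genuinely delicate point is the self-reference in $(\Rightarrow)$: the defining equations for $W_{h(x)}$ and $W_{h'(x)}$ mention $f(h(x),h'(x))$, i.e.\ the sets are specified in terms of their own indices. I would therefore be careful to present the construction as a bona fide application of the recursion theorem with parameters to a primitive, total, $x$-uniform description of the two enumeration procedures, and to check that the resulting $h,h'$ — hence $\rho$ — are genuinely recursive (in particular total, with no hidden waiting in their definition). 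I expect this bookkeeping, rather than any conceptual difficulty, to be the crux.
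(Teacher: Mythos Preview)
The paper does not give its own proof of this theorem: it is simply quoted from Smullyan's book and used as a black box in the proof of Theorem~\ref{close EI}. There is therefore nothing to compare against.

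For what it is worth, your proposal is correct and is essentially the classical argument. In $(\Leftarrow)$ you only need the semi-reduction $g$ for the single fixed pair $(A_0,B_0)$, which the hypothesis supplies non-uniformly, and then everything is effective as you say. In $(\Rightarrow)$ your self-referential construction is exactly the standard Smullyan-style trick; the clean way to discharge the ``main obstacle'' you flag is to first use $s$-$m$-$n$ to get primitive recursive $\sigma_1(e_1,e_2,x),\sigma_2(e_1,e_2,x)$ so that $W_{\sigma_1(e_1,e_2,x)}$ enumerates $T_P$ and, if $x\in B$, additionally enumerates $f(\varphi_{e_1}(x),\varphi_{e_2}(x))$ once those values converge (similarly for $\sigma_2$), and then apply the double recursion theorem to the total functions $G_k(e_1,e_2)=\text{index of }\lambda x.\sigma_k(e_1,e_2,x)$. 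The fixed points $e_1^\ast,e_2^\ast$ then give total $h=\varphi_{e_1^\ast}$, $h'=\varphi_{e_2^\ast}$ satisfying your defining equations, so $\rho(x)=f(h(x),h'(x))$ is total and the case analysis goes through as you wrote.
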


\begin{theorem}\label{close EI}
If $U$ and $V$ are  {\sf EI} theories, then $T=U \oplus V$ is  {\sf EI}.
\end{theorem}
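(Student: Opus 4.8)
The plan is to mirror the argument of Theorem \ref{tEI close}, but to exploit Smullyan's characterization (Theorem \ref{EI thm}) rather than working directly with the separating function. So the real goal is: given any disjoint pair $(A,B)$ of RE sets, produce a semi-reduction of $(A,B)$ to $(T_P,T_R)$ where $T = U\oplus V$. Since $U$ and $V$ are {\sf EI}, Theorem \ref{EI thm} gives recursive functions $g_U$ semi-reducing $(A,B)$ to $(U_P,U_R)$ and $g_V$ semi-reducing $(A,B)$ to $(V_P,V_R)$; that is, $x\in A \Rightarrow g_U(x)\in U_P$ and $x\in B \Rightarrow g_U(x)\in U_R$, and likewise for $g_V$ and $V$.

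Next I would write down the candidate semi-reduction into $T$. For $x$, let $\theta = g_U(x)$ (a sentence of $L(U)$) and $\tau = g_V(x)$ (a sentence of $L(V)$), and set $h(x) = (P\wedge\theta)\vee(\neg P\wedge\tau)$, a sentence in the combined signature of $T = U\oplus V$. Clearly $h$ is recursive. I then need to verify the two semi-reduction clauses. If $x\in A$, then $U\vdash\theta$ and $V\vdash\tau$; since $T$ proves $P\rightarrow\varphi$ for every theorem $\varphi$ of $U$ and $\neg P\rightarrow\psi$ for every theorem $\psi$ of $V$, we get $T\vdash P\rightarrow\theta$ and $T\vdash \neg P\rightarrow\tau$, hence $T\vdash (P\wedge\theta)\vee(\neg P\wedge\tau)$ (case split on $P$), i.e. $h(x)\in T_P$. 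If $x\in B$, then $U\vdash\neg\theta$ and $V\vdash\neg\tau$, so $T\vdash P\rightarrow\neg\theta$ and $T\vdash\neg P\rightarrow\neg\tau$; a case split on $P$ gives $T\vdash \neg\theta\vee\neg\tau$, and since $T\vdash\neg\theta\rightarrow\neg(P\wedge\theta)$ and $T\vdash\neg\tau\rightarrow\neg(\neg P\wedge\tau)$, we obtain $T\vdash\neg\big((P\wedge\theta)\vee(\neg P\wedge\tau)\big)$, i.e. $h(x)\in T_R$. Thus $h$ semi-reduces $(A,B)$ to $(T_P,T_R)$, and since $(A,B)$ was an arbitrary disjoint pair of RE sets, Theorem \ref{EI thm} tells us $T$ is {\sf EI}.

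One point that needs care, and which I expect to be the main thing to get right, is the consistency of $T$: Theorem \ref{EI thm} and the definition of {\sf EI} both presuppose $T$ is a consistent RE theory, so I should note that $U\oplus V$ is consistent whenever $U$ and $V$ are (a model of $U$ expanded by making $P$ true, together with an arbitrary interpretation of the $V$-signature, satisfies all axioms of $U\oplus V$; more carefully one takes a model of $U$ for the "$P$ true" side and a model of $V$ for the "$P$ false" side and glues), and that $U\oplus V$ is RE since $U$ and $V$ are. It is also worth recording explicitly the elementary propositional facts used above — that $\big(P\rightarrow\theta\big)\wedge\big(\neg P\rightarrow\tau\big)$ entails $(P\wedge\theta)\vee(\neg P\wedge\tau)$, and dually — since these are the only logical steps, but they are routine truth-table verifications and need not be belabored. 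Beyond that the argument is purely a matter of assembling the two given semi-reductions, so I do not anticipate a substantive obstacle; the content is entirely in having Smullyan's theorem available to replace the more delicate staged construction that was needed for {\sf tEI}.
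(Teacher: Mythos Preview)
Your approach is essentially the paper's own: invoke Smullyan's characterization (Theorem~\ref{EI thm}), take semi-reductions into $(U_P,U_R)$ and $(V_P,V_R)$, and combine them via a sentence that cases on $P$; the paper uses the propositionally equivalent form $g(n)=(P\rightarrow f_1(n))\wedge(\neg P\rightarrow f_2(n))$ in place of your $(P\wedge\theta)\vee(\neg P\wedge\tau)$, but this is cosmetic.

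One small repair is needed in your $x\in B$ case: from $\neg\theta\vee\neg\tau$ together with $\neg\theta\rightarrow\neg(P\wedge\theta)$ and $\neg\tau\rightarrow\neg(\neg P\wedge\tau)$ you only get the \emph{disjunction} $\neg(P\wedge\theta)\vee\neg(\neg P\wedge\tau)$, not the conjunction you need. The detour through $\neg\theta\vee\neg\tau$ loses information; instead argue directly that $P\rightarrow\neg\theta$ is equivalent to $\neg(P\wedge\theta)$ and $\neg P\rightarrow\neg\tau$ is equivalent to $\neg(\neg P\wedge\tau)$, then conjoin to obtain $\neg\big((P\wedge\theta)\vee(\neg P\wedge\tau)\big)$. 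With that fix the proof goes through, and your added remark on the consistency of $U\oplus V$ is a welcome point the paper leaves implicit.
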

\begin{proof}\label{}
Suppose $U$ and $V$ are {\sf EI} theories, and $T=U \oplus V$. By Theorem \ref{EI thm}, it suffices to show that any disjoint pair of RE sets is semi-reducible to $(T_P, T_R)$.

Let $(A,B)$ be any disjoint pair of RE sets. Since $U$ is {\sf EI}, there is a recursive function $f_1$ such that:
\begin{enumerate}[(i)]
  \item if $n\in A$, then $f_1(n)\in U_P$;
  \item if $n\in B$, then $f_1(n)\in U_R$.
\end{enumerate}
Since $V$ is {\sf EI}, there is a recursive function $f_2$ such that:
\begin{enumerate}[(i)]
  \item if $n\in A$, then $f_2(n)\in V_P$;
  \item if $n\in B$, then $f_2(n)\in V_R$.
\end{enumerate}

Note that $U_P\subseteq \{\phi: P\rightarrow\phi\in T_P\}$,
$V_P\subseteq \{\phi: \neg P\rightarrow\phi\in T_P\}$,
$U_R\subseteq \{\phi: P\wedge\phi\in T_R\}$,
$V_R\subseteq \{\phi: \neg P\wedge\phi\in T_R\}$.
Define $g(n)=(P\rightarrow f_1(n))\wedge (\neg P\rightarrow f_2(n))$. Since $f_1$ and $f_2$ are recursive,
$g$ is also recursive.

Suppose $n\in A$. Then $P\rightarrow f_1(n)\in T_P$ and $\neg P\rightarrow f_2(n)\in T_P$.
Thus, if $n\in A$, then $g(n)\in T_P$.

Suppose $n\in B$. Since $P\wedge f_1(n)\in T_R$, we have:

\begin{equation}\label{}
T\vdash P\rightarrow \neg f_1(n).\label{first eqn}
\end{equation}

Since $\neg P\wedge f_2(n)\in T_R$, we have:

\begin{equation}\label{}
T\vdash \neg P\rightarrow \neg f_2(n).\label{second eqn}
\end{equation}

From (\ref{first eqn}) and (\ref{second eqn}), we have:
$T\vdash (P\wedge \neg f_1(n))\vee (\neg P\wedge \neg f_2(n))$.

Thus, if $n\in B$, then $g(n)\in T_R$. Hence, $(A,B)$ is semi-reducible to $(T_P, T_R)$.
\end{proof}

\begin{remark}
By the similar argument as in Theorem \ref{minimal tEI}, Theorem \ref{close EI} can also be used to give a second proof of Theorem \ref{minimal EI}.
\end{remark}

\begin{theorem}[Harvey Friedman]\label{dense lemma}
Assuming that we allow piecewise interpretations, for any finitely axiomatizable theory $A$, if $\top\lhd A$, then there exists a finitely axiomatizable theory $B$ such that
$\top\lhd B\lhd A$.\footnote{This proof  is simple than Friedman's proof in \cite{Friedman07}, and the idea of this proof is from \cite{Visser16}.}
\end{theorem}
\begin{proof}\label{}
We employ the G\"{o}del fixed point construction to find a sentence $\theta$ such that $\mathbf{PA}\vdash \theta\leftrightarrow (A\unlhd A\oplus [\theta])\preceq (A\oplus [\theta]\unlhd \top)$.

\begin{claim}
$A\ntrianglelefteq A\oplus [\theta]$.
\end{claim}
\begin{proof}\label{}
Suppose $A\unlhd A\oplus [\theta]$ holds. By Fact \ref{key fact on Q}(4), either $\theta$ holds or $\theta^{\perp}$  holds.

Case one: Suppose $\theta$ holds. By Fact \ref{key fact on Q}(2),  $[\theta]\unlhd \top$. Since $A\unlhd A\oplus [\theta]\unlhd [\theta]$, we have $A\unlhd \top$, which is a contradiction.

Case two: Suppose $\theta$ does not hold. Then $\theta^{\perp}$  holds. By definitions, it is easy to check that $\theta^{\perp}\preceq \theta$ holds. By Fact \ref{key fact on Q}(3), $[\theta]\vdash \theta^{\perp}$. Since $[\theta]\vdash \theta^{\perp}\wedge \theta$ and $\theta^{\perp}\wedge \theta\vdash \bot$, we have $[\theta]\vdash \bot$. Since $(A\unlhd A\oplus [\theta])\preceq (A\oplus [\theta]\unlhd \top)$
does not hold and $A\unlhd A\oplus [\theta]$ holds, we have $A\oplus [\theta]\unlhd \top$ holds. Since $[\theta]\vdash \bot$, we have $A\oplus [\theta]=A$. Thus $A\unlhd \top$, which is a contradiction.
\end{proof}

By the similar argument, we can show that   $A\oplus [\theta]\ntrianglelefteq \top$.
Thus, $\top\lhd A\oplus [\theta]\lhd A$.
\end{proof}

\begin{theorem}\label{effective version for finte theory}
Assuming that we allow piecewise interpretations, if $T$ is a finitely axiomatizable {\sf EI} theory, then we can effectively find a finitely axiomatized {\sf EI} theory which is strictly weaker than $T$ w.r.t. interpretability.
\end{theorem}
\begin{proof}\label{}
This follows from Theorem \ref{dense lemma}. Let $\theta$ be the sentence defined in Theorem \ref{dense lemma}. Note that since  $T$ and $[\theta]$ are finitely axiomatizable, the theory $S=T\oplus [\theta]$ as in Theorem \ref{dense lemma}  is also finitely axiomatizable.
By Theorem \ref{dense lemma}, $\top\lhd S\lhd T$. Recall that the fixed point construction of the sentence $\theta$ in the proof of Theorem \ref{dense lemma} is effective.  Thus, we can effectively find such a theory $S$. Suppose $\theta$ holds. Since we allow piecewise interpretations, by Fact \ref{key fact on Q}(2), $[\theta]\unlhd \top$. Thus, $S\unlhd \top$, which contradicts that $\top\lhd S$. Hence, $\theta$ does not hold. By Fact \ref{key fact on Q}(1),  $[\theta]\supseteq \mathbf{R}$.
By Lemma \ref{relation about EI}(8), $[\theta]$ is {\sf EI}. By Theorem \ref{close EI}, $S$ is {\sf EI}.
\end{proof}

Now, we give another proof which does not assume piecewise interpretability, but its proof is non-constructive in some sense.

\begin{lemma}\label{minimum EI}
If there is a minimal finitely axiomatizable {\sf EI} theory w.r.t. interpretability, then it is a minimum  finitely axiomatizable {\sf EI} theory w.r.t. interpretability.
\end{lemma}
\begin{proof}\label{}
Suppose $T$ is a minimal finitely axiomatizable {\sf EI} theory w.r.t. interpretability.  Suppose $S$ is any  finitely axiomatizable {\sf EI} theory. Consider the theory $T\oplus S$. By Theorem \ref{close EI}, $T\oplus S$ is a finitely axiomatizable  {\sf EI} theory. By the minimality of $T, T\unlhd T\oplus S$ and thus $T\unlhd S$. Hence, $T$ is the minimum  finitely axiomatizable {\sf EI} theory w.r.t. interpretability.
\end{proof}

\begin{theorem}\label{no minial finite EI}
There are no minimal finitely axiomatizable {\sf EI} theories w.r.t. interpretability.
\end{theorem}
\begin{proof}\label{}
By Theorem \ref{close EI}, if $S$ and $T$ are finitely axiomatizable {\sf EI} theories, then $S \oplus T$ is also a finitely axiomatizable {\sf EI} theory.
By Lemma \ref{minimum EI}, it suffices to show that there is no minimum  finitely axiomatizable {\sf EI} theory w.r.t. interpretability. The following argument comes from Theorem 3.1 in \cite{PV}. The key observation is Lemma \ref{relation about EI}(8).

Suppose $T$ is the minimum  finitely axiomatizable {\sf EI} theory w.r.t. interpretability.
Consider any $\Sigma^0_1$-sentence $\sigma$. If $\sigma$ holds, then $[\sigma]$ has a finite model and thus $T\ntrianglelefteq [\sigma]$. If $\sigma$ does not hold and $[\sigma]$ is consistent, since $[\sigma]\supseteq \mathbf{R}$ by Fact \ref{key fact on Q}(1), $[\sigma]$ is {\sf EI} by Lemma \ref{relation about EI}(8), and thus $T\unlhd [\sigma]$ since $T$ is the minimum  finitely axiomatizable {\sf EI} theory. If $[\sigma]$ does not hold and is inconsistent, then $T\unlhd [\sigma]$. Thus,  $\sigma$  does not hold if and only if $T\unlhd [\sigma]$, which contradicts the fact that the set of $\sigma$ such that $T\unlhd [\sigma]$	 is an RE set.
\end{proof}

\begin{remark}
The proof of Theorem \ref{no minial finite EI} does not tell us, given a finitely axiomatizable {\sf EI} theory $T$, how to construct another  finitely axiomatizable {\sf EI} theory which is strictly weaker than $T$ w.r.t. interpretability. It only tells us that if there is a minimal finitely axiomatizable {\sf EI} theory w.r.t. interpretability, this leads to a contradiction. In this sense, the proof of Theorem \ref{no minial finite EI} is non-constructive.
\end{remark}

We conclude the paper with some perspectives for future work. The work in this paper can be extended from two perspectives. We could examine the existence of minimal RE theories with meta-mathematical properties with respect to other notions of reducibility except for Turing degree and interpretability that we have considered in this paper. On the other hand, we could examine the existence of minimal RE theories  w.r.t. interpretability for other meta-mathematical properties that we have not examined. For example, a natural question is: are there minimal {\sf Creative} theories w.r.t. interpretability. We did not explore it here.


\begin{thebibliography}{100}

\bibitem{Cheng23}
Yong Cheng. On the relationships between some meta-mathematical properties of arithmetical theories.  \emph{Logic Journal of the IGPL}, in press, DOI: 10.1093/jigpal/jzad015,  2023.


\bibitem{Friedman07}
Harvey M. Friedman. Interpretations: according to Tarski. Nineteenth Annual Tarski Lectures, 2007.

\bibitem{Janiczak} Antoni Janiczak. Undecidability of some simple formalized theories.
\emph{Fundamenta  Mathematicae}, vol. 40 (1953), pp. 131-139.


\bibitem{Kleene}
Stephen C. Kleene. \emph{Introduction to Metamathematics}. Amsterdam, Gr\"{o}ningen,
New York and Toronto, 1952.

\bibitem{PV} Fedor Pakhomov, Juvenal Murwanashyaka and Albert Visser. There are no minimal essentially undecidable Theories. 	 \emph{Journal of Logic and Computation}, DOI: 10.1093/logcom/exad005


\bibitem{Pour-EI}
Marian Boykan Pour-EI. Effectively Extensible Theories. \emph{The Journal of Symbolic Logic}, Mar, 1968, Vol. 33, No.1, pp. 56-68.


\bibitem{Rogers87} Hartley Rogers. \emph{Theory of Recursive Functions and Effective Computability}. MIT Press, Cambridge, 1987.

\bibitem{Shoenfield61}
Joseph R. Shoenfield. Undecidable and creative theories. \emph{Fund. Math.} 48, 171-179, 1961.


\bibitem{Smullyan}
Raymond M.Smullyan. \emph{Recursion theory for meta-mathematics}. Oxford University Press, 1993.



\bibitem{undecidable} {Alfred Tarski, Andrzej Mostowski and Raphael M. Robinson.}
\emph{Undecidable theories}. Studies in Logic and the Foundations of Mathematics, North-Holland, Amsterdam, 1953.

\bibitem{Vaught67} Robert A. Vaught. Axiomatizability by a schema. \emph{The Journal of Symbolic},  no. 4, pp.473-479, 1967.

\bibitem{Visser12}
Albert Visser. Vaught's theorem on axiomatizability by a scheme.
\emph{The Bulletin of Symbolic Logic}, Volume 18, Number 3, 2012.

\bibitem{Visser16} {Albert Visser.}
On $\mathbf{Q}$.
\emph{Soft Comput}, 21(1): 39-56 (2017).





\end{thebibliography}
\end{document}